\title{Singularity of the generator subalgebra in $q$-Gaussian algebras}
\author{ \textsc{Chenxu Wen}}
\date{}
\newtheorem{thm}{Theorem}
\newtheorem*{thm*}{Theorem}
\newtheorem*{cor*}{Corollary}
\newtheorem*{conj*}{Conjecture}
\newtheorem{prop}[thm]{Proposition}
\newtheorem*{prop*}{Proposition}
\newtheorem{cor}[thm]{Corollary}
\newtheorem*{defn*}{Definition}
\newtheorem*{que*}{Question}
\theoremstyle{definition}
\newtheorem{rem}[thm]{Remark}
\def\H{{\mathcal{H}}}
\def\F{{\mathcal{F}}}
\def\G{{\Gamma}}
\begin{document}

\maketitle

\begin{abstract}
Given $-1<q<1$ and a separable real Hilbert space $\H_{\mathbb{R}}$ with dimension no less than 2, we prove that the generator subalgebra in the $q$-Gaussian algebra $\G_q(\H_{\mathbb{R}})$ is singular.
\end{abstract}

\section*{Introduction}
For a real number $-1<q<1$ and for a given separable real Hilbert space $\H_{\mathbb{R}}$, Bo\.{z}ejko and Speicher \cite{BozejkoSpeicher91brownian} introduced the $q$-deformed Fock space $\F_q(\H_{\mathbb{R}})$, in order to construct the $q$-commutation relation
\[\ell^*(e)\ell(f)-q\ell(f)\ell^*(e)=\left\langle e,f\right\rangle Id .\]
They also studied the $q$-Gaussian algebra $\G_q(\H_{\mathbb{R}})$ as the von Neumann algebra generated by the $q$-Gaussian variables $\{\ell(e)+\ell^*(e):e\in \H_{\mathbb{R}}\}$ in \cite{BozejkoSpeicher94CPmaps}. This family of von Neumann algebras has attracted lots of attention since as $q$ varies, we obtain interesting interpolation between different von Neumann algebras: the free group factor when $q=0$, the hyperfinite II$_1$ factor when $q=-1$ and $L^{\infty}(X)$ when $q=1$. It is known that assuming $\dim \H_{\mathbb{R}}\geq 2$, these $\G_q(\H_{\mathbb{R}})$ are II$_1$ factors \cite{ricard2005factoriality}, non-injective \cite{nou04non-injectivity}, strongly solid \cite{avsec2011s-solid} and for small $q$ we actually get back free group factors \cite{GuionnetShlyakhtenko14freetransport}.

One interesting family of subalgebras of $\G_q(\H_{\mathbb{R}})$ is the ones generated by a single $q$-Gaussian variable $\ell(e)+\ell^*(e)$ for some $e\in \H_{\mathbb{R}}$. In \'{E}ric Ricard's proof \cite{ricard2005factoriality} of factoriality for $\G_q(\H_{\mathbb{R}})$, these subalgebras played a fundamental role. Moreover, when $q=0$ those $\ell(e)+\ell^*(e)$ are exactly Voiculescu's semi-circular elements in the free probability theory and those subalgebras are just the generator subalgebra of free group factors. This motivates us to call such subalgebras \textit{the generator subalgebra of $q$-Gaussian algebras}.

The (genuine) generator subalgebras are long known to be singular and even maximal amenable inside the free group factors \cite{popa83maxinjective}. Thus it is natural to ask whether the same hold for generator subalgebras in $q$-Gaussian algebras. The problem is that for $q\neq 0$ and $e_1,e_2\in \H_{\mathbb{R}}$ mutually orthogonal, $\ell(e_1)+\ell^*(e_1)$ and $\ell(e_2)+\ell^*(e_2)$ are no longer freely independent. Another difficulty in dealing with $q$-Fock spaces is that it is hard to control the operator norms of $W(e^{\otimes n})$ (see Preliminaries for the notation). However as we will see, for generator subalgebras coming from orthogonal vectors, the situation is  not so far from the free case and the computation is manageable. The main result of this paper is 
\begin{thm*}
For any $-1<q<1$ and for any separable real Hilbert space $\H_{\mathbb{R}}$ with dimension no less than 2, the generator subalgebras in $q$-Gaussian algebras are singular.
\end{thm*}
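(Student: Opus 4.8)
The plan is to prove the stronger statement that the generator subalgebra $A:=\{W(e)\}''$ is a \emph{mixing} subalgebra of $M:=\G_q(\H_{\mathbb{R}})$: for all $x,y\in M$ with $E_A(x)=E_A(y)=0$ and every sequence $(u_n)$ of unitaries in $A$ with $u_n\to 0$ weakly, $\|E_A(xu_ny)\|_2\to 0$, where $E_A\colon M\to A$ is the trace-preserving conditional expectation. Mixing implies singularity by a standard soft argument, which I would include. Since $W(e)$ has the diffuse $q$-semicircular law, $A$ is diffuse, so there is a sequence $a_n\in\mathcal{U}(A)$ with $a_n\to 0$ weakly; if $u\in\mathcal{U}(M)$ normalizes $A$, then $u^{*}a_nu\in A$, so writing $u=c+u_0$ with $c:=E_A(u)$, $E_A(u_0)=0$, and applying $E_A$ to $(c+u_0)^{*}a_n(c+u_0)$, the cross terms vanish by $A$-bimodularity of $E_A$ (as $E_A(u_0)=E_A(u_0^{*})=0$), leaving $u^{*}a_nu=c^{*}a_nc+E_A(u_0^{*}a_nu_0)$; the last term $\to 0$ in $\|\cdot\|_2$ by mixing while $\|u^{*}a_nu\|_2=1$, so $\limsup_n\|c^{*}a_nc\|_2=1$, and since $\|c^{*}a_nc\|_2\le\|c\|_\infty\|a_nc\|_2=\|c\|_\infty\|c\|_2\le\|c\|_2$, we obtain $1\le\|c\|_2\le\|u\|_2=1$, forcing $u_0=0$ and $u\in A$. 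Hence $\mathcal{N}_M(A)''=A$.

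To prove mixing, a $\|\cdot\|_2$-density reduction brings us to $x=W(\xi)$ and $y=W(\eta)$ for simple tensors $\xi,\eta$ (words in a fixed orthonormal basis of $\H_{\mathbb{R}}$ containing $e$) that are not powers of $e$ --- exactly the case $E_A(W(\xi))=E_A(W(\eta))=0$, since $\langle f,e\rangle=0$ for basis vectors $f\neq e$; one passes to general $x,y\in M\ominus A$ using that the $*$-algebra of such Wick words is $\|\cdot\|_2$-dense in $M$, plus a Kaplansky-density approximation to keep operator norms controlled. Fix such $\xi,\eta$ and put
\[
T_{\xi,\eta}\colon L^2(A)\to L^2(A),\qquad T_{\xi,\eta}(z)=E_A\!\big(W(\xi)\,z\,W(\eta)\big),
\]
a bounded operator --- here it is essential that $\xi,\eta$ are \emph{fixed} finite tensors, so $W(\xi),W(\eta)$ are honest bounded operators, while the only object whose length will grow, $W(e^{\otimes m})$, enters solely through the $\|\cdot\|_2$-normalized vectors $\widehat{W}(e^{\otimes m}):=W(e^{\otimes m})/\|W(e^{\otimes m})\|_2$, $\|W(e^{\otimes m})\|_2^2=[m]_q!$, which form an orthonormal basis of $L^2(A)$; this is how the proof avoids estimating $\|W(e^{\otimes m})\|$. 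The mixing bound for these $x,y$ is precisely $\|T_{\xi,\eta}(u_n)\|_2\to 0$ for every weakly null $(u_n)\subseteq\mathcal{U}(A)$, which holds once $T_{\xi,\eta}$ is shown to be \emph{compact}; and for that it suffices to show that the matrix $\big(\langle T_{\xi,\eta}\widehat{W}(e^{\otimes m}),\widehat{W}(e^{\otimes j})\rangle\big)_{m,j\ge 0}$ vanishes for $|m-j|>|\xi|+|\eta|$ and has entries tending to $0$ as $m\to\infty$, a banded matrix with entries vanishing at infinity being the operator-norm limit of its finite corners (by a Schur-test bound on the tails).

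The matrix entry equals $([m]_q!\,[j]_q!)^{-1/2}$ times the mixed moment $\tau\big(W(e^{\otimes j})\,W(\xi)\,W(e^{\otimes m})\,W(\eta)\big)$, and a degree count already yields the band bound $|m-j|\le|\xi|+|\eta|$. By the Bo\.{z}ejko--Speicher formula for vacuum expectations of products of Wick operators, this moment is $\sum_{\pi}q^{\mathrm{cr}(\pi)}$, the sum over those pair partitions $\pi$ of the $j+|\xi|+m+|\eta|$ letters with no pair internal to one of the four blocks and with paired letters equal (the remaining terms vanish), $\mathrm{cr}(\pi)$ being the number of crossings. In any such $\pi$ the $t\ge 1$ non-$e$ letters of $\xi$ are paired with the $t$ non-$e$ letters of $\eta$ (so $\xi$ and $\eta$ must have equally many non-$e$ letters, else $T_{\xi,\eta}=0$), and all but at most $|\xi|+|\eta|$ of the $m$ letters of the third block are paired straight across to the first block; each such straight pair crosses each of the $t$ non-$e$ pairs, whence $\mathrm{cr}(\pi)\ge t(m-|\xi|-|\eta|)\ge m-|\xi|-|\eta|$. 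Grouping the $\pi$ by their ``boundary data'' near the block junctions --- of which there are only polynomially many in $m$ --- and summing $q^{\mathrm{cr}(\pi)}$ within each group over the bijection realizing the roughly $m$ straight pairs reassembles, after the forced power $q^{t(m-|\xi|-|\eta|)}$ is pulled out, into a $q$-factorial of index within $|\xi|+|\eta|$ of $m$, which the normalization cancels up to a factor bounded above and below; each group thus contributes $O_{\xi,\eta}(|q|^{m})$ and, summing, $\big|\langle T_{\xi,\eta}\widehat{W}(e^{\otimes m}),\widehat{W}(e^{\otimes j})\rangle\big|=O_{\xi,\eta}\!\big(\mathrm{poly}(m)\,|q|^{m}\big)\to 0$.

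The main obstacle I anticipate is this last paragraph: casting the $q$-Wick/moment formula for the triple product in a form from which the crossing count can be read off, and carrying out the bookkeeping of the $q$-weights --- in particular seeing the ``bulk'' $e$--$e$ pairings reassemble into $q$-factorials that the normalization absorbs, leaving a power of $q$ decaying linearly in $m$ against only polynomially many boundary configurations. It is precisely here that the $q$-combinatorics requires care, and where normalizing the growing tensor $W(e^{\otimes m})$ in $\|\cdot\|_2$ --- rather than trying to bound its operator norm --- is decisive. The remaining ingredients --- ``mixing $\Rightarrow$ singular'', the density reduction to Wick words, and ``banded with vanishing entries $\Rightarrow$ compact'' --- are standard.
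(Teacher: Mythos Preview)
Your plan is correct and is essentially the paper's proof in different packaging. Both arguments fix the orthonormal basis $\widehat{W}(e^{\otimes m})=W(e^{\otimes m})/\sqrt{[m]_q!}$ of $L^2(A)$, reduce to Wick words $W(\xi),W(\eta)$ with at least one letter orthogonal to $e$, and exploit the single decisive mechanism: any contribution to $E_A\!\big(W(\xi)W(e^{\otimes m})W(\eta)\big)$ forces the non-$e$ letters of $\xi$ to be matched with those of $\eta$ \emph{across} the bulk block $e^{\otimes m}$, producing a power $|q|^{cm}$ that beats the polynomial growth of the combinatorics.

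The differences are cosmetic. You phrase the goal as compactness of $T_{\xi,\eta}$ on $L^2(A)$ (banded matrix with entries $O(\mathrm{poly}(m)\,|q|^m)$), while the paper proves the stronger Hilbert--Schmidt bound $\sum_N\|E_A(av_Nb)\|_2^2<\infty$ via an abstract ``basis criterion'' (their Proposition~\ref{basis criteria for singularity}); your estimate in fact yields the same summability. On the computational side you use the Bo\.{z}ejko--Speicher pair-partition moment formula and count crossings, whereas the paper expands via the Wick formula into creation/annihilation operators and tracks the factor $q^{N}$ coming from $\ell^*(f_j)$ having to ``pass through'' $e^{\otimes N}$ before hitting an $f_j$; these are two descriptions of the same phenomenon. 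One small remark: the density extension from Wick words to general $x,y\in M\ominus A$ does not need Kaplansky---since $\|u_n\|=1$, the map $(x,y)\mapsto E_A(xu_ny)$ is jointly $\|\cdot\|_2$-continuous in each variable with the other held bounded, which is exactly how the paper closes its argument.
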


The author strongly believes that the generator subalgebras should be maximal amenable inside $q$-Gaussian algebras, but he is not able to prove it. Thus we leave it as a question:
\begin{que*}
Are the generator subalgebras of $\G_q(\H_{\mathbb{R}})$ maximal amenable? Are they disjoint from other maximal amenable subalgebras, in the sense of \cite{wen15radialmasa}?
\end{que*}

Upon finishing writing up this paper, the author learned that Bikram and Mukherjee \cite{BikramMukherjee16generatormasa} independently proved (among other things) similar results for generator subalgebras in $q$-deformed Araki-Woods algebras which contain the main theorem in this paper as a special case. Their approach to show the mixing property of the generator subalgebra is by proving that the left-right measure of the generator subalgebra is Lebesgue absolutely continuous.  The proof essentially boils down to the tracial case and is very similar to ours. Indeed, both the proof of one key ingredient of their paper, \cite[Theorem 5.3]{BikramMukherjee16generatormasa} and the proof of Theorem \ref{Main theorem} in the current paper, are strongly motivated by Ricard's work \cite{ricard2005factoriality}.

\section*{Acknowledgement}

The author is grateful to Stephen Arvsec for bringing the generator subalgebras into his attention during GPOTS 2015 at Purdue. He would also like to thank Purdue University for the warm hospitality.

\section{Preliminaries}
Let $-1<q<1$ be a fixed real number and let $\H_{\mathbb{R}}$ be a separable real Hilbert space with dimension no less than 2. Denote by $\H:=\H_{\mathbb{R}}\otimes_{\mathbb{R}} \mathbb{C}$ the complexification of $\H_{\mathbb{R}}$. Define an inner product on $\bigoplus_{n\geq 0}\H^{\otimes n}$ by 
\[\left\langle e_1 \otimes \cdots\otimes e_n,f_1\otimes\cdots\otimes f_m\right\rangle_q=\delta_n(m)\sum_{\sigma\in S_m} q^{|\sigma|} \left\langle e_1\otimes\cdots\otimes e_n,f_{\sigma(1)}\otimes\cdots\otimes f_{\sigma(m)} \right\rangle, \]
where $S_m$ is the group of permutations on $\{1,\cdots, m\}$, $|\sigma|$ stands for the number of inversions of $\sigma$, $\H^{\otimes 0}=\mathbb{C}\Omega$ is the space spanned by the vacuum vector $\Omega$ and the inner product on the right-hand side is the usual one on the tensor product of Hilbert spaces. The $q$-deformed Fock space $\F_q(\H_{\mathbb{R}})$ is the completion of $(\bigoplus_{n\geq 0}\H^{\otimes n},\left\langle\cdot,\cdot \right\rangle_q)$. We will simply write $\|\cdot\|$ to be the norm induced by this inner product.

For $e\in \H_{\mathbb{R}}$, we define the \textit{left creation operator} $\ell(e)$ on $\F_q(\H_{\mathbb{H}})$ by $\ell(e)(\Omega)=e$ and
\begin{equation}\label{defn of creation}
\ell(e)(e_1\otimes\cdots\otimes e_n)=e\otimes e_1\otimes \cdots\otimes e_n,
\end{equation}
for $n\geq 1$.  $\ell(e)$ is a bounded operator and its adjoint $\ell^*(e)$ is called the \textit{(left) annihilation operator}, which is given by $\ell^*(e)(\Omega)=0$ and 
\begin{equation}\label{defn of annihilation}
\ell^*(e)(e_1\otimes\cdots\otimes e_n)=\sum_{1\leq i\leq n}q^{(i-1)}\left\langle e,e_i\right\rangle e_1\otimes \cdots\otimes \hat{e}_i\otimes \cdots\otimes e_n,
\end{equation}
for $n\geq 1$, where $\hat{e}_i$ means a removed letter. One can define similarly the right creation and annihilation operators.

For $e\in \H_{\mathbb{R}}$, let 
\[W(e)=\ell(e)+\ell^*(e).\]
and let $\G_q(\H_{\mathbb{R}})$ be the von Neumann algebra generated by $\{W(e):e\in \H_{\mathbb{R}}\}$. We call it the \textit{$q$-Gaussian algebra associated with $\H_{\mathbb{R}}$}. It is known \cite{BozejkoSpeicher94CPmaps} that $\G_q(\H_{\mathbb{R}})$ is a finite von Neumann algebra with $\Omega$  a separating and cyclic trace vector. Consequently, each element $x\in \G_q(\H_{\mathbb{R}})$ is uniquely determined by $\xi=x\cdot\Omega\in \F_q(\H_{\mathbb{R}})$ and we write $x=W(\xi)$. This notation is consistent with the above definition for $W(e), e\in \H_{\mathbb{R}}$. Moreover, an easy induction shows that $\G_{q}(\H_{\mathbb{R}})\Omega$ contains all the simple tensors $e_1\otimes \cdots\otimes e_n \in \H^{\otimes n}$.

Here we record two facts that will be used in this paper. 
\begin{itemize}
\item Let $e\in \H$ be a unit vector, then
\begin{equation}\label{l^2 norm estimate}
\|e^{\otimes n}\|^2=[n]_q!,
\end{equation}
where $[k]_q=\dfrac{1-q^k}{1-q}$ and $[n]_q!=[1]_q\cdots [n]_q$.
\item (\textbf{Wick formula}) Let $e_1\otimes \cdots\otimes e_n\in \H^{\otimes n}$, then 
\begin{equation}\label{Wick formula}
\begin{split}
W(e_1\otimes \cdots\otimes e_n)&=\sum_{i=0}^{n}\sum _{\sigma\in S_n/(S_{n-i}\times S_i)} q^{|\sigma|}\ell(e_{\sigma(1)})\cdots\ell(e_{\sigma(n-i)})\\
&\times \ell^*(e_{\sigma(n-i+1)})\cdots\ell^*(e_{\sigma(n)}),
\end{split}
\end{equation}
where $\sigma$ is the representative of the \textit{right} coset of $S_{n-i}\times S_{i}$ in $S_n$ with minimal number of inversions. For each coset such a representative is unique thus the above formula is well-defined.

\end{itemize}

From now on we fix a unit vector $e\in \H_{\mathbb{R}}$ and we call the von Neumann subalgebra $\G_q(\mathbb{R}e)\subset \G_q(\H_{\mathbb{R}})$ a \textit{generator subalgebra}. It is shown by Ricard \cite{ricard2005factoriality} that this gives a maximal abelian subalgebra (masa) of $\G_q(\H_{\mathbb{R}})$, however we will not need this fact in our proof.

Let $T:\H_{\mathbb{R}}\rightarrow\H_{\mathbb{R}}$ be a $\mathbb{R}$-linear contraction. We still denote by $T$ its complexification. Then the first quantization $\F_q(T)$, is the bounded operator on $\F_q(\H_{\mathbb{R}})$ given by 
\[\F_q(T)=Id_{\mathbb{C}\Omega}\oplus\bigoplus_{n\geq 1}T^{\otimes n}.\]

The\textit{ second quantization of} $T$, is the unique unital completely positive map $\G_q(T)$ on $\G_q(\H_{\mathbb{R}})$ given by 
\[\G_q(T)(W(\xi))=W(\F_q(T)(\xi)).\]
In particular, if $T=E_e:\H_{\mathbb{R}}\rightarrow \mathbb{R}e$ is the orthogonal projection, then $\G_q(E_e)$ is the conditional expectation of $\G_q(\H_{\mathbb{R}})$ onto $\G_q(\mathbb{R}e)$.

\section{Singularity of the generator subalgebra $\G_q(\mathbb{R}e)$}

Recall that a von Neumann subalgebra $A\subset M$ is called \textit{singular}, if the normalizer of $A$, defined by $\mathcal{N}_M(A):=\{u\in \mathcal{U}(M):uAu^*=A\}$, is contained in $A$. As is well-known, singularity is closely related to a weakly mixing property: we say that for a finite von Neumann algebra $(M,\tau)$, a subalgebra $A$ is \textit{weakly mixing} in $M$ if there exists a sequence of unitaries $\{u_n\}$ in $A$, such that 
\[\lim_{n\to \infty}\|E_A(au_nb)-E_A(a)u_nE_A(b)\|_2= 0, \forall a,b\in M,\]
where $\|x\|^2_2=\tau(x^*x)$ for any $x\in M$. If the above limit equals $0$ for any sequence of unitaries $\{u_n\}$ in $A$ which converges to 0 weakly, then $A$ is said to be \textit{mixing} in $M$. Clearly for diffuse subalgebras, mixing implies weakly mixing and weakly mixing implies singularity (see \cite{JS08mixingsubalgebra}, \cite{SinclairSmith08masabook}).

We will use this sufficient condition to show the singularity of the generator subalgebra. In order to prove that the generator subalgebra is mixing in the $q$-Gaussian algebra, we will need the following basis criteria from \cite[Theorem 11.4.1]{SinclairSmith08masabook} . The statement is slightly stronger than that in \cite{SinclairSmith08masabook}, but the proof is the same. For convenience of the reader as well as for completeness, we include the proof here.

\begin{prop}\label{basis criteria for singularity}
Let $M$ be a separable finite von Neumann algebra and $A\subset M$ a diffuse subalgebra. Let $Y\subset M$ be a subset whose linear span is $\|\cdot\|_2$-dense in $L^2(M)$ and $\{v_n\}\subset A$ is an orthonormal basis for $L^2(A)$. If 
\[\sum_{n}\|E_A(av_nb)-E_A(a)v_nE_A(b)\|_2^2<\infty ,\]
for all $a,b\in Y$, then $A$ is mixing in $M$. In particular, $A$ is singular in $M$.
\end{prop}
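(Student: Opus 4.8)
The plan is to first establish the mixing property and then deduce singularity from the standard implication recalled above. Since $\{v_n\}$ is an orthonormal basis for $L^2(A)$, any element of $A$ that converges weakly to $0$ can be handled by a telescoping/approximation argument once we understand the behaviour of the "asymptotic $A$-bimodularity defect''
\[
D(a,b,u):=\|E_A(aub)-E_A(a)\,u\,E_A(b)\|_2
\]
for $u$ running over the basis vectors $v_n$. The key observation is that for fixed $a,b$, the map $u\mapsto E_A(aub)-E_A(a)uE_A(b)$ is a bounded linear map on $L^2(A)$, and the hypothesis says precisely that its Hilbert--Schmidt norm (relative to the orthonormal basis $\{v_n\}$) is finite when $a,b\in Y$. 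A Hilbert--Schmidt operator is in particular compact, hence sends any weakly null sequence to a norm-null sequence; applied to a sequence of unitaries $u_n\in A$ with $u_n\to 0$ weakly (equivalently $\|\cdot\|_2$-weakly, since the $u_n$ are bounded), this gives $D(a,b,u_n)\to 0$ for all $a,b\in Y$.

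Next I would remove the restriction $a,b\in Y$. Fix $a,b\in M$ and $\varepsilon>0$. Using that $\mathrm{span}\,Y$ is $\|\cdot\|_2$-dense in $L^2(M)$, pick $a_0,b_0$ in that span with $\|a-a_0\|_2<\varepsilon$ and $\|b-b_0\|_2<\varepsilon$; one must be slightly careful because approximating in $\|\cdot\|_2$ is not the same as approximating in operator norm, so I would first reduce to bounded $a,b$ and use the Kaplansky-type trick, or more simply estimate
\[
\|E_A(aub)-E_A(a_0ub_0)\|_2 \le \|E_A((a-a_0)ub)\|_2+\|E_A(a_0u(b-b_0))\|_2,
\]
bounding the first term by $\|(a-a_0)ub\|_2\le \|a-a_0\|_2\,\|u\|\,\|b\|$ and similarly for the second (here one uses $\|u\|\le 1$ for unitaries and that $\|E_A(x)\|_2\le\|x\|_2$). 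The term $\|E_A(a)uE_A(b)-E_A(a_0)uE_A(b_0)\|_2$ is estimated the same way using $\|E_A(a)-E_A(a_0)\|_2\le\|a-a_0\|_2$. Since $D(a_0,b_0,u_n)\to 0$ by the previous paragraph (extending bilinearly from $Y$ to $\mathrm{span}\,Y$), we get $\limsup_n D(a,b,u_n)\lesssim \varepsilon(\|a\|+\|b\|+\dots)$, and letting $\varepsilon\to 0$ shows $D(a,b,u_n)\to 0$. This is exactly the definition of $A$ being mixing in $M$.

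Finally, since $A$ is assumed diffuse, it contains a sequence of unitaries converging weakly to $0$ (e.g. powers of a Haar unitary generating a diffuse abelian subalgebra), so mixing implies weak mixing, and weak mixing of a diffuse subalgebra implies singularity by the standard argument recalled before the statement (if $u\in\mathcal{N}_M(A)$, then $E_A(a u_n u)=E_A(au_n)u$ for all $a\in A$ while weak mixing forces $E_A(au_nu)-E_A(a)u_nE_A(u)\to 0$, and comparing the two expressions after multiplying suitably forces $u\in A$). I expect the only genuinely delicate point to be the $\|\cdot\|_2$-versus-norm approximation in passing from $Y$ to general $a,b\in M$; the compactness argument that does the real work is essentially immediate once the Hilbert--Schmidt reformulation is in place.
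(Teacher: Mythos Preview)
Your argument is correct and is essentially a conceptual repackaging of the paper's proof. Where you observe that the hypothesis makes the map $T_{a,b}\colon u\mapsto E_A(aub)-E_A(a)\,u\,E_A(b)$ a Hilbert--Schmidt (hence compact) operator on $L^2(A)$, so that weakly null bounded sequences are sent to norm-null ones, the paper carries out precisely this compactness argument by hand: it expands $u_n=\sum_i\tau(u_nv_i^*)v_i$, splits the sum at a cutoff $k$, controls the finitely many terms $i\le k$ via $|\tau(u_nv_i^*)|\to 0$ (weak convergence) together with the crude bound $\|T_{a,b}(v_i)\|_2\le 2\|a\|\,\|b\|$, and controls the tail $i>k$ by Cauchy--Schwarz and the summability hypothesis. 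This is exactly the textbook proof that a Hilbert--Schmidt operator sends weakly null sequences to strongly null ones, written out explicitly; your formulation is cleaner but buys nothing extra. For the density step the paper says only ``noticing that $\|u_n\|=1$, a density argument then completes the proof''; the issue you flag about $\|a_0\|$ not being controlled is real but is resolved by approximating in two stages (first choose $a_0$, then choose $b_0$ with $\|b-b_0\|_2$ small relative to the now-fixed $\|a_0\|$).
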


\begin{proof}
Let $\{u_n\}_{n\geq 1}$ be a sequence of unitaries in $A$ which converges to 0 weakly. Let $a,b\in Y$ be arbitrary elements in $Y$, $D:=\sup\{\|a\|,\|b\|\}$ and let $\epsilon>0$ be fixed. By the hypothesis we can find a $k$ such that 
\[\sum_{i>k}\|E_A(av_ib)-E_A(a)v_iE_A(b)\|_2^2<\epsilon^2.\]

Since $u_n\to 0$ weakly, we can choose an $n_0$ such that 
\[|\tau(u_n^*v_i)|\leq\dfrac{ \epsilon}{kD^2} ,\]
for all $1\leq i\leq k$ and for all $n\geq n_0$.

Write $u_n=\sum_i \tau(u_nv_i^*)v_i$, then we have 
\begin{equation}
\begin{split}
\left\|E_A(au_nb)-E_A(a)u_nE_A(b)\right\|_2&=\left\|\sum_i \tau(u_nv_i^*)\left(E_A(av_ib)-E_A(a)v_iE_A(b)\right)\right\|_2\\
&\leq \left\|\sum_{1\leq i\leq k} \tau(u_nv_i^*)\left(E_A(av_ib)-E_A(a)v_iE_A(b)\right)\right\|_2
\\&+\left\|\sum_{i>k} \tau(u_nv_i^*)\left(E_A(av_ib)-E_A(a)v_iE_A(b)\right)\right\|_2.
\end{split}
\end{equation}
For $n\geq n_0$,
\begin{equation}
\begin{split}
&\left\|\sum_{1\leq i\leq k} \tau(u_nv_i^*)\left(E_A(av_ib)-E_A(a)v_iE_A(b)\right)\right\|_2\\
&\leq \sum_{1\leq i\leq k}\dfrac{\epsilon}{kD^2}\left\|E_A(av_ib)-E_A(a)v_iE_A(b)\right\|_2\leq \sum_{1\leq i\leq k}\dfrac{\epsilon}{kD^2}\cdot 2\|a\|\|b\|\|v_i\|_2 \leq 2\epsilon.
\end{split}
\end{equation}
On the other hand, an easy application of Cauchy-Schwarz inequality gives that 
\begin{equation}
\begin{split}
&\left\|\sum_{i>k} \tau(u_nv_i^*)\left(E_A(av_ib)-E_A(a)v_iE_A(b)\right)\right\|_2\\
&\leq \left(\sum_{i>k}|\tau(u_nv_i^*)|^2\right)^{1/2}\left(\sum_{i>k}\left\|E_A(av_nb)-E_A(a)v_nE_A(b)\right\|_2^2\right)^{1/2}\\
&\leq ||u_n||_2\cdot\epsilon\leq\epsilon.
\end{split}
\end{equation}

Therefore,
\[\lim_{n\to\infty}\|E_A(au_nb)-E_A(a)u_nE_A(b)\|_2=0, \forall a,b\in Y.\]
Noticing that $\|u_n\|=1$, a density argument then completes the proof.
\end{proof}

The main result of this paper is 

\begin{thm}\label{Main theorem}
Let $q$ be a real number between $(-1,1)$ and let $\H_{\mathbb{R}}$ be a separable real Hilbert space with dimension greater or equal to $2$. Let $e\in \H_{\mathbb{R}}$ be a unit vector and write $A=\G_q(\mathbb{R}e)$ the generator subalgebra of the $q$-Gaussian algebra $M=\G_q(\H_{\mathbb{R}})$. Suppose that $\{e_i: 0\leq i\leq \dim(\H_{\mathbb{R}})-1\}$ is an orthonormal basis for $\H_{\mathbb{R}}$, where $e_0=e$. We write $E_A$ the conditional expectation from $M$ onto $A$, which can be obtained via the second quantization of $E_e$.

Define 
\[v_0=W(\Omega)=1, v_j=\dfrac{W(e^{\otimes j})}{\|W(e^{\otimes j})\|_2}, \forall j\in \mathbb{N}\]
 and let 
 \[Y=\{W(f_{1}\otimes\cdots \otimes f_{s}):s\geq 0, f_{t}\in \{e_i\}, \forall 1\leq t\leq s\}\subset M.\] 
 Then $\{v_j\}_{j\geq 0}$ is an orthonormal basis for $L^2(A)$ and $span(Y)$ is dense in $L^2(M)$ in the $\|\cdot \|_2$-norm, such that   
\begin{equation}\label{main inequality}
\sum_{j}\|E_A(av_jb)-E_A(a)v_jE_A(b)\|_2^2<\infty ,
\end{equation}
for all $a,b\in Y$. Consequently, $A$ is mixing (thus singular) in $M$.
\end{thm}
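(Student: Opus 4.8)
The plan is to apply Proposition~\ref{basis criteria for singularity} directly, so everything reduces to estimating the sum in \eqref{main inequality} for basis elements $a = W(f_1\otimes\cdots\otimes f_r)$ and $b = W(g_1\otimes\cdots\otimes g_s)$ taken from $Y$. The first step is bookkeeping: since $E_A$ is the second quantization of the projection $E_e$ onto $\mathbb{R}e$, we have $E_A(W(\xi)) = W(\mathcal{P}\xi)$, where $\mathcal{P}$ projects $\F_q(\H_{\mathbb{R}})$ onto the closed span of $\{e^{\otimes n} : n\geq 0\}$. In particular $E_A(a)$ is either $0$ (if some $f_t \perp e$) or a scalar multiple of $v_r$; likewise for $E_A(b)$. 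So the term $E_A(a)v_jE_A(b)$ lives in the (at most one-dimensional, after normalization) span of $W(e^{\otimes(r+s+j)})$-type vectors and is easy to compute. The real content is controlling $E_A(av_jb) = E_A\bigl(W(f_1\otimes\cdots\otimes f_r)\,W(e^{\otimes j})\,W(g_1\otimes\cdots\otimes g_s)\bigr)$ for large $j$.

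The second step is to expand the triple product $W(f_1\otimes\cdots\otimes f_r)W(e^{\otimes j})W(g_1\otimes\cdots\otimes g_s)$ using the Wick formula \eqref{Wick formula} for each factor, or equivalently to compute the vector $W(f_1\otimes\cdots\otimes f_r)W(e^{\otimes j})\cdot(g_1\otimes\cdots\otimes g_s) \in \F_q(\H_{\mathbb{R}})$ by repeatedly applying creation and annihilation operators, and then apply $\mathcal{P}$. The key observation — and this is where the geometry of orthogonal generators, following Ricard \cite{ricard2005factoriality}, comes in — is that for the projection $\mathcal{P}$ onto the $e$-column to produce something nonzero, essentially all the letters $f_t$ and $g_u$ that are orthogonal to $e$ must be annihilated against each other, and all the letters equal to $e$ must either survive or be annihilated in controlled ways. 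When $j$ is large, the $j$ copies of $e$ in the middle factor $W(e^{\otimes j})$ cannot all be cancelled by the bounded number ($r+s$) of letters coming from $a$ and $b$, so a definite number of them must survive; this forces a specific combinatorial structure on the surviving term and, crucially, produces a factor of $q$ raised to a power that grows linearly in $j$ (from the inversions counted in the inner products $\langle\cdot,\cdot\rangle_q$ and in the Wick expansion, where moving annihilation operators past the block of $e$'s costs powers of $q$). Combined with the $\ell^2$-norm bound $\|e^{\otimes n}\|^2 = [n]_q! \leq \prod_{k\geq 1}\frac{1}{1-|q|^{|k|}} \cdot (1-|q|)^{-n}$... more precisely $[n]_q!$ grows only like $C\cdot(1-|q|)^{-n}$ or is even bounded by a constant depending on $q$, one gets that $\|E_A(av_jb) - E_A(a)v_jE_A(b)\|_2^2$ decays geometrically in $j$, hence the sum converges.

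The third step is to make the cancellation-vs-decay dichotomy precise. I would argue that $E_A(av_jb)$, for $j$ larger than some $j_0 = j_0(r,s)$, equals $E_A(a)v_jE_A(b)$ plus an error of $\|\cdot\|_2$-norm at most $C\lambda^j$ for some $\lambda < 1$: when none of the extra letters are orthogonal to $e$, the ``main'' contraction pattern reproduces exactly $E_A(a)v_jE_A(b)$ (this is a small computation with the explicit formulas for $[n]_q!$ and the coefficients appearing in the Wick expansion applied to $e^{\otimes(r+j+s)}$), and every other pattern either vanishes or carries a power of $q$ of size at least $c\cdot j$; when some letter is orthogonal to $e$, $E_A(a)=0$ or $E_A(b)=0$ and one only needs the decay estimate on $E_A(av_jb)$. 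Summing the geometric series over $j$ gives \eqref{main inequality}, and then Proposition~\ref{basis criteria for singularity} yields that $A$ is mixing, hence singular, in $M$.

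I expect the main obstacle to be the bookkeeping in step two: carefully tracking which permutations in the three Wick expansions survive after applying $\mathcal{P}$, and extracting a clean lower bound of the form $c\cdot j$ on the exponent of $q$ in every non-matching term. One has to be careful that annihilation operators $\ell^*(e)$ acting on a long string containing the $e^{\otimes j}$ block pick up the factors $q^{i-1}$ from \eqref{defn of annihilation}, and that summing these geometric-type contributions over the $O(j)$ possible positions still leaves an overall bound that decays in $j$ rather than growing — this requires the decay from the $q$-powers to beat both the polynomial number of terms and the at-most-geometric growth of $[n]_q!$, which holds precisely because $|q| < 1$. The normalization constants $\|W(e^{\otimes j})\|_2 = ([j]_q!)^{1/2}$ must be handled alongside, but since $[j]_q!$ is bounded above and below by positive constants times $(1-|q|)^{-j}$ (in fact it converges as $j\to\infty$ when... one should simply use $ (1-|q|)^{-j} \geq [j]_q! \geq 1$ for $q \geq 0$ and the analogous two-sided bound for $q<0$), this does not affect the geometric decay.
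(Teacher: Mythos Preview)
Your proposal is correct and follows essentially the same route as the paper: reduce to $a,b\in Y$ with at least one letter orthogonal to $e$ (so that $E_A(a)v_jE_A(b)=0$), expand $a$ and $W(e^{\otimes j})$ via the Wick formula, and use that each annihilation $\ell^*(f)$ with $f\perp e$ must cross the surviving block $e^{\otimes(N-i)}$ and hence contributes a factor of order $|q|^{N}$, giving geometric decay that beats the polynomially many Wick terms and the bounded ratio $[N+c]_q!/[N]_q!$. The paper's write-up differs only cosmetically: it disposes of the case $a,b\in A$ at the outset as giving difference exactly zero (your ``main term plus geometric error'' framing there is harmless but unnecessary), and it handles the normalization directly as the bounded ratio $[N+n+l-2i]_q!/[N]_q!$ rather than via two-sided growth bounds on $[j]_q!$.
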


\begin{proof}
The statements that $\{v_j\}_{j\geq 0}$ is an orthonormal basis for $L^2(A)$ and that $span(Y)$ is dense in $L^2(M)$ in the $\|\cdot \|_2$-norm are clear from definitions. We just need to show the estimate (\ref{main inequality}).

To this end, first note that if either $a$ or $b$ is from $A$, then (\ref{main inequality}) is trivially true. Indeed, in this case $E_A(av_jb)-E_A(a)v_jE_A(b)=0, \forall j\geq 0$. 

Therefore we can assume that $a,b \in M\ominus A$ are of the form
\[a=W(f_1\otimes\cdots\otimes f_s), b=W(g_1\otimes \cdots\otimes g_t),\]
where $s,t\in \mathbb{N}$, $f_k,g_l\in \{e_i\}_{i\geq 0}, 1\leq k\leq s, 1\leq l \leq t$ and there is some $1\leq k_0\leq s,1\leq l_0\leq t$ such that $f_{k_0},g_{l_0}\in \{e_i\}_{i\geq 1}$.

Moreover, by the Wick formula (\ref{Wick formula}), the form of the annihilation operator and the assumption that $\{e_i:0\leq i\leq \dim(\H_{\mathbb{R}})-1\}$ is an orthonormal set, we can also assume that for each $i\geq 1$, the multiplicity of $e_i$ in the word form of $a$ is the same as in the word form of $b$. 

For simplicity, we will show the conclusion (\ref{main inequality}) when $b$ is of the form
\[b=W(f_1\otimes\cdots f_m\otimes e^{\otimes l}),\]
where $f_1,\cdots, f_m\in \{e_i:i\geq 1\}$. Assume also that the multiplicity of $e$ in the word form of $a$ is $n$ ($n$ may differ from $l$). The other cases are completely similar.

For each $N\geq 0$, let 

\begin{equation}
C_N=\|E_A(av_Nb)-E_A(a)v_NE_A(b)\|_2^2=\dfrac{\left\|\F_q(E_e)\left(aW(e^{\otimes N})(f_1\otimes\cdots f_m\otimes e^{\otimes l})\right)\right\|^2}{\|W(e^{\otimes N})\|^2_2}.
\end{equation}
Our goal is to estimate $C_N$ when $N$ is large.

First we apply the Wick formula to $a$. Note that there are finitely many terms in the expansion, thus it suffices to estimate each term. Also note that each $f_i,1\leq i\leq m$ has to appear in this expansion of $a$ as an annihilation operator, in order to get anything non-zero under $\F_q(E_e)$. Again for simplicity, we consider here only the terms of the form
\begin{equation}
D_N=\dfrac{1}{\|W(e^{\otimes N})\|^2_2}\left\|\F_q(E_e)\left(\ell^n(e)\ell^*(f_1)\cdots\ell^*(f_m)W(e^{\otimes N})(f_1\otimes\cdots f_m\otimes e^{\otimes l})\right)\right\|^2,
\end{equation} 
and the rest cases are almost identical.

Next, we apply Wick formula to $W(e^{\otimes N})$ inside the above expression for $D_N$,
\begin{equation}
\begin{split}
D_N&=\dfrac{1}{\|W(e^{\otimes N})\|^2_2}\left\|\F_q(E_e)\left(\ell^n(e)\ell^*(f_1)\cdots\ell^*(f_m)W(e^{\otimes N})(f_1\otimes\cdots f_m\otimes e^{\otimes l})\right)\right\|^2 \\
&=\dfrac{1}{\|W(e^{\otimes N})\|^2_2}\left\|\F_q(E_e)\left(\ell^n(e)\ell^*(f_1)\cdots\ell^*(f_m)\sum_{i=0}^{N}\sum _{\sigma } q^{|\sigma|}\ell^{N-i}(e)\ell^{*i}(e)(f_1\otimes\cdots f_m\otimes e^{\otimes l})\right)\right\|^2\\
&=\dfrac{1}{\|W(e^{\otimes N})\|^2_2}\left\|\F_q(E_e)\left(\ell^n(e)\ell^*(f_1)\cdots\ell^*(f_m)\sum_{i=0}^{l}\sum _{\sigma} q^{|\sigma|}\ell^{N-i}(e)\ell^{*i}(e)(f_1\otimes\cdots f_m\otimes e^{\otimes l})\right)\right\|^2,
\end{split}
\end{equation}
where the last equality comes from the orthogonality of $\{e_i\}$.

For each $0\leq i\leq l$, we have 
\begin{equation}
\F_q(E_e)\left(\ell^n(e)\ell^*(f_1)\cdots\ell^*(f_m)\ell^{N-i}(e)\ell^{*i}(e)(f_1\otimes\cdots f_m\otimes e^{\otimes l})\right)\in \mathbb{C}e^{\otimes (N+n+l-2i)},
\end{equation}
therefore if we let 
\begin{equation}
D_{N,i}=\dfrac{1}{\|W(e^{\otimes N})\|^2_2}\left\|\F_q(E_e)\left(\ell^n(e)\ell^*(f_1)\cdots\ell^*(f_m)\sum _{\sigma} q^{|\sigma|}\ell^{N-i}(e)\ell^{*i}(e)(f_1\otimes\cdots f_m\otimes e^{\otimes l})\right)\right\|^2,
\end{equation}
then $D_N=\sum_{i=0}^lD_{N,i}$.

Now there is an $M_1=M_1(l,m,q)>0$, such that 
\begin{equation}
D_{N,i}\leq M_1 \cdot \dfrac{|S_N/(S_{N-i}\times S_i)|^2}{\|W(e^{\otimes N})\|^2_2}\left\|\F_q(E_e)\left(\ell^n(e)\ell^*(f_1)\cdots\ell^*(f_m)(e^{\otimes (N-i)}\otimes f_1\otimes\cdots f_m\otimes e^{\otimes (l-i)})\right)\right\|^2.
\end{equation}

For $\ell^n(e)\ell^*(f_1)\cdots\ell^*(f_m)(e^{\otimes (N-i)}\otimes f_1\otimes\cdots f_m\otimes e^{\otimes (l-i)})$ to contribute something non-zero in $\F_q(\mathbb{R}e)$,  each $l^*(f_j)$ has to first pass $e^{\otimes (N-i)}$ then hit $f_1\otimes\cdots\otimes f_m$. The definition for annihilation operators (\ref{defn of annihilation}) then implies that there exists an $M_2=M_2(m,l)>0$, such that 
\begin{equation}
\begin{split}
\left\|\F_q(E_e)\left(\ell^n(e)\ell^*(f_1)\cdots\ell^*(f_m)(e^{\otimes (N-i)}\otimes f_1\otimes\cdots f_m\otimes e^{\otimes (l-i)})\right)\right\|^2\leq M_2 \cdot |q|^{2N}\|e^{\otimes (N+n+l-2i)}\|^2.
\end{split}
\end{equation}

Combining the previous two inequalities, we obtain
\begin{equation}
\begin{split}
D_{N,i}&\leq M_1\cdot M_2\cdot |q|^{2N} \cdot \dfrac{|S_N/(S_{N-i}\times S_i)|^2}{\|W(e^{\otimes N})\|^2_2}\cdot \|e^{\otimes(N+n+l-2i)}\|^2\\
&=M_1\cdot M_2\cdot |q|^{2N}\cdot \left|\dfrac{N!}{(N-i)!i!}\right|^2 \cdot\dfrac{[N+n+l-2i]_q!}{[N]_q!}\\
&\leq M_1\cdot M_2\cdot |q|^{2N} \cdot N^{2i} \cdot\dfrac{[N+n+l-2i]_q!}{[N]_q!}.
\end{split}
\end{equation}
Since $|q|<1$, it is then clear that $\sum_N D_{N,i}<\infty$ which implies that 
\[\sum_N C_N<\infty.\]
The mixing property and the singularity of $A$ in $M$ then follow from Proposition \ref{basis criteria for singularity}.
\end{proof}
\begin{rem}
The reason we use Proposition \ref{basis criteria for singularity} to show the mixing property, instead of proving it directly, is that the basis we choose may be unbounded in the operator (uniform) norm hence the density argument may fail.
\end{rem}
Note that in the proof of the theorem, we never used the fact that $A$ is maximal abelian. In fact, $A$ being singular and abelian implies that it is maximal abelian. Thus our proof recovers Ricard's results from \cite{ricard2005factoriality}.
\begin{cor}
With the same assumptions as in the previous theorem, the generator subalgebra is a masa in the $q$-Gaussian algebra.
\end{cor}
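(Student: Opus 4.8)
The plan is to deduce maximal abelianness of $A=\G_q(\mathbb{R}e)$ from the two facts already established: that $A$ is abelian and that $A$ is singular in $M=\G_q(\H_{\mathbb{R}})$. Recall that a subalgebra is abelian precisely when it is contained in its own relative commutant, i.e. $A\subseteq A'\cap M$, and it is maximal abelian exactly when this inclusion is an equality, $A=A'\cap M$. So the task reduces to showing $A'\cap M\subseteq A$.

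First I would observe that $A'\cap M$ normalizes $A$: if $x\in A'\cap M$ then $xax^{-1}=a$ for every $a\in A$ whenever $x$ is invertible, and more to the point every unitary $u\in A'\cap M$ satisfies $uAu^*=A$ trivially since $u$ commutes with all of $A$. Hence $\mathcal{U}(A'\cap M)\subseteq \mathcal{N}_M(A)$. By singularity, $\mathcal{N}_M(A)\subseteq A$, so every unitary of $A'\cap M$ lies in $A$. Since $A'\cap M$ is a von Neumann algebra and hence is spanned (in norm) by its unitaries — indeed every element of a von Neumann algebra is a finite linear combination of unitaries in it — we conclude $A'\cap M\subseteq A$. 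Combined with $A\subseteq A'\cap M$ this gives $A=A'\cap M$, which is the definition of $A$ being maximal abelian.

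The only thing that needs a word of care is that $A$ is genuinely abelian, so that the inclusion $A\subseteq A'\cap M$ holds to begin with; this is immediate since $A=\G_q(\mathbb{R}e)$ is generated by the single self-adjoint element $W(e)$, hence is commutative. No step here is a real obstacle: the content of the Corollary is entirely carried by the singularity established in Theorem \ref{Main theorem}, and the passage from singular-plus-abelian to maximal-abelian is the elementary argument above. This also explains the remark preceding the Corollary that the proof never invoked maximal abelianness of $A$ — that property is an output, not an input.
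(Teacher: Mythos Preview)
Your proposal is correct and follows exactly the route the paper takes: the paper simply asserts that ``$A$ being singular and abelian implies that it is maximal abelian'' as the entire justification for the corollary, and you have supplied the standard elementary argument behind that assertion (unitaries of $A'\cap M$ normalize $A$, hence lie in $A$ by singularity, hence $A'\cap M\subseteq A$). There is nothing to add.
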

\begin{cor}
For any separable real Hilbert space $\H_{\mathbb{R}}$ with dimension greater or equal to 2, $\G_q(\H_{\mathbb{R}})$ is a II$_1$ factor.
\end{cor}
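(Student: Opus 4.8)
The plan is to deduce both factoriality and the type of $M=\G_q(\H_{\mathbb{R}})$ from the maximal abelianness of the generator subalgebras established in the previous Corollary, combined with the finiteness already recorded in the Preliminaries. Throughout, recall that the vacuum state $\tau(x)=\langle x\Omega,\Omega\rangle$ is a faithful normal tracial state on $M$ (since $\Omega$ is a separating cyclic trace vector), so that $M$ is a finite von Neumann algebra. By the Murray--von Neumann dichotomy, a finite factor is either of type I$_n$ for some $n<\infty$ (hence $\cong M_n(\mathbb{C})$) or of type II$_1$. Thus it suffices to prove that $M$ is a factor and that $M$ is infinite-dimensional.

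For factoriality the key observation is that the previous Corollary applies verbatim to \emph{every} unit vector: for any unit $f\in\H_{\mathbb{R}}$, the subalgebra $\G_q(\mathbb{R}f)$ is a masa, and therefore $Z(M)=M'\cap M\subseteq (\G_q(\mathbb{R}f))'\cap M=\G_q(\mathbb{R}f)$. Since $\dim\H_{\mathbb{R}}\geq 2$, I would fix the two orthonormal vectors $e_0=e$ and $e_1$ from the given basis and conclude that $Z(M)\subseteq \G_q(\mathbb{R}e_0)\cap\G_q(\mathbb{R}e_1)$. The task then reduces to showing this intersection is trivial.

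To carry this out I would pass to the Fock space through the separating vector $\Omega$. By Theorem \ref{Main theorem}, the space $L^2(\G_q(\mathbb{R}e_k))$ is identified with the closed span of $\{e_k^{\otimes n}:n\geq 0\}$ inside $\F_q(\H_{\mathbb{R}})$. The main point is to compute the intersection of these two closed subspaces. Using that tensors of distinct degrees are orthogonal for the $q$-inner product (the factor $\delta_n(m)$ in its definition), any vector lying in both spans must agree degree by degree, so that $c_n e_0^{\otimes n}=d_n e_1^{\otimes n}$ in each $\H^{\otimes n}$; since $e_0,e_1$ are linearly independent, the powers $e_0^{\otimes n}$ and $e_1^{\otimes n}$ are linearly independent in $\H^{\otimes n}$ for every $n\geq 1$, forcing $c_n=d_n=0$ there. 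Hence the intersection is $\mathbb{C}\Omega$, so $Z(M)\Omega\subseteq\mathbb{C}\Omega$, and because $\Omega$ is separating this gives $Z(M)=\mathbb{C}1$; that is, $M$ is a factor.

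Finally I would settle the type. The factor $M$ contains the infinite-dimensional subalgebra $A=\G_q(\mathbb{R}e)$: the vectors $W(e^{\otimes j})$ for $j\geq 0$ are mutually orthogonal and nonzero, their norms $\sqrt{[j]_q!}$ being positive since $[k]_q=\frac{1-q^k}{1-q}>0$ for $-1<q<1$, hence they are linearly independent. Therefore $\dim_{\mathbb{C}}M=\infty$, which excludes type I$_n$ and leaves only type II$_1$. I expect the only genuine difficulty to be the intersection computation of the two generator subalgebras in the factoriality step; once that is in place, the finiteness of $M$ and the exclusion of the finite type I case are routine.
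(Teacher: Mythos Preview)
Your argument is correct. The paper states this corollary without proof, merely remarking that the masa result for generator subalgebras ``recovers Ricard's results'' on factoriality; your explicit route---intersecting the two generator masas $\G_q(\mathbb{R}e_0)$ and $\G_q(\mathbb{R}e_1)$ via the degree decomposition of $\F_q(\H_{\mathbb{R}})$, and then ruling out type~I$_n$ by infinite-dimensionality of $A$---is exactly the natural way to flesh out that implicit deduction.
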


\small
\bibliographystyle{plain}
\bibliography{ref}

\end{document}